\def\R{\mathbb{R}}
\def\N{\mathbb{N}}
\newtheorem{theorem}{Theorem}
\newtheorem{definition}[theorem]{Definition}
\newtheorem{claim}[theorem]{Claim}
\newtheorem{question}[theorem]{Question}
\title{Nonexistence of extremizers for certain convex curves}
\author{Diogo Oliveira e Silva}
\address{
        Diogo Oliveira e Silva\\
        Department of Mathematics\\
        University of California \\
        Berkeley, CA 94720-3840, USA}
\email{dosilva@math.berkeley.edu}
\thanks{The  author was partially supported by the Funda\c{c}\~{a}o para a Ci\^{e}ncia e a Tecnologia 
(FCT/Portugal grant SFRH/BD/28041/2006) 
and the National Science Foundation under agreement DMS-0901569.}
\keywords{Extremizers, Tomas-Stein inequality, convolution of arclength measure.}
\subjclass[2010]{42A05}
\date{\today}                                           
\begin{document}
\maketitle

\begin{abstract}
{\noindent We establish the nonexistence of extremizers for a local Fourier restriction inequality on a certain class of planar convex curves whose curvature satisfies a natural assumption. We accomplish this by studying the local behavior of the triple convolution of the arclength measure on the curve with itself, and show in particular that every extremizing sequence concentrates at a point on the curve.}
\end{abstract}

\section{Introduction}

This paper continues the study initiated in \cite{OS}, where we considered a certain class of smooth, convex, planar curves $\Gamma\subset\R^2$ equipped with arclength measure $\sigma$ and parametrized (not necessarily by arclength) by $\gamma$. If the curvature $\kappa$ of $\Gamma$ is positive everywhere, then the Tomas-Stein inequality \cite{T} states that there exists a finite constant $C<\infty$ such that
\begin{equation}\label{TS}
\|\widehat{f\sigma}\|_{L^6(\R^2)}\leq C \|f\|_{L^2(\sigma)}
\end{equation}
for every $f\in L^2(\sigma)$. With the Fourier transform defined as $\widehat{g}(\xi)=\int e^{-ix\cdot \xi} g(x)dx$, denote by

$${\mathbf C}[{\Gamma}]=\sup_{0\neq f\in L^2(\sigma)} \|\widehat{f\sigma}\|_6 \|f\|_{L^2(\sigma)}^{-1}$$ 
the optimal constant in the inequality \eqref{TS}. By an extremizing sequence for the inequality \eqref{TS} we mean a sequence $\{f_n\}$ of functions in $L^2(\sigma)$ satisfying $\|f_n\|_2\leq 1$ and such that $\|\widehat{f_n \sigma}\|_6\rightarrow {\mathbf{C}[\Gamma]}$ as $n\rightarrow\infty$. An extremizer for the inequality \eqref{TS} is then a function $f\neq 0$ in $L^2(\sigma)$ which satisfies $\|\widehat{f\sigma}\|_6=\mathbf{C}[\Gamma]\|f\|_2$.

Assume that $\Gamma$ has no points with collinear tangents, and that its curvature $\kappa$ satisfies\footnote{Derivatives in \eqref{3/2geocond} are  taken with respect to the natural arclength parameter. }
\begin{equation}\label{3/2geocond}
(\kappa''\cdot\kappa^{-3})(p)<\frac{3}{2},
\end{equation}
for every point $p\in \Gamma$ at which  $\kappa$ attains a global minimum. 
In this case, \cite{OS} establishes in particular the existence of extremizers for the corresponding Fourier restriction inequality \eqref{TS}. More generally, extremizing sequences of nonnegative functions are shown to always have a subsequence which converges to an extremizer.

It is natural to ask about the significance of the geometric condition \eqref{3/2geocond}. For instance, if it is not satisfied, does this mean that extremizers  fail to exist?
While we are unable to provide a complete answer to this question, in this paper we consider the case in which condition \eqref{3/2geocond} fails in a rather strong sense,
and prove a complementary (negative) result along these lines. Before we state it precisely, we need a definition:

\begin{definition}
A sequence of functions $\{f_n\}$ in $L^2(\sigma)$ satisfying $\|f_n\|_2\rightarrow 1$ as $n\rightarrow\infty$ concentrates at a point $p=\gamma(z)\in\Gamma$ if for every $\epsilon, r>0$ there exists $N\in\N$ such that, for every $n\geq N$,
$$\int_{|w-z|\geq r} |f_n(\gamma(w))|^2 \|\gamma'(w)\| dw <\epsilon.$$
\end{definition}

\noindent As pointed out in \cite{L, CS,  FVV1, Q1, Q2, OS}, the possibility that an extremizing sequence might concentrate is a major obstruction to its precompactness and therefore to the existence of extremizers. The most involved step in \cite{OS} consisted in ruling out concentration of an extremizing sequence of nonnegative functions at a point on the curve.

 Here is our main result:

\begin{theorem}\label{extremizersdonotexist}
Let $\Gamma$ be a compact arc of a smooth, convex curve in the plane, equipped with arclength measure $\sigma$. Assume that the curvature $\kappa$ of $\Gamma$ is a strictly positive function. If the second derivative of the curvature with respect to arclength satisfies
\begin{equation}\label{3geocond}
\frac{d^2\kappa}{ds^2}(p_0)>3\kappa(p_0)^3
\end{equation}
at some $p_0\in\Gamma$ which is a global minimum of the curvature, then 
there exist an extremizing sequence which concentrates at $p_0$ and a neighborhood of $p_0$ for which the local analogue of inequality \eqref{TS} has no extremizers.  
\end{theorem}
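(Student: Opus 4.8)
The strategy is to compute the precise rate at which a concentrating sequence saturates (or fails to saturate) the local inequality, and to show that under \eqref{3geocond} a concentrating "cap" sequence beats every non-concentrating competitor, while simultaneously no genuine extremizer can exist on a small enough neighborhood. The key analytic object, as in \cite{OS}, is the triple convolution $\sigma*\sigma*\sigma$, since by Plancherel one has $\|\widehat{f\sigma}\|_6^6 = (2\pi)^2 \langle (f\sigma)*(f\sigma)*(f\sigma), \overline{(f\sigma)*(f\sigma)*(f\sigma)}\rangle$ up to constants, and thus ${\mathbf C}[\Gamma]^6$ is governed by the supremum of a quadratic form whose kernel is built from $\sigma*\sigma*\sigma$. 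The plan is: (i) derive a local asymptotic expansion for $\sigma*\sigma*\sigma$ near the relevant point of its support, keeping track of the coefficient determined by $\kappa$, $\kappa'$, and $\kappa''$ at $p_0$; (ii) test the local inequality with a one-parameter family $f_\delta$ of (normalized) indicators of arcs of length $\sim\delta$ centered at $p_0$, and expand $\|\widehat{f_\delta\sigma}\|_6^6 / \|f_\delta\|_2^6$ as $\delta\to 0$; (iii) compare this concentration value with the best constant achievable by sequences bounded away from $p_0$; (iv) conclude that extremizers cannot exist on a neighborhood of $p_0$.

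**Key steps in order.**

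First I would set up the local problem: fix a small neighborhood $U$ of $p_0$ in $\Gamma$, let $\sigma_U$ be arclength restricted to $U$, and let ${\mathbf C}[U]$ be the corresponding optimal constant. A soft argument shows ${\mathbf C}[U]$ is nondecreasing in $U$ and that the local and global problems are linked; the point is to show ${\mathbf C}[U]$ equals the "concentration constant" at $p_0$ for $U$ small. Second, I would carry out the delicate geometric computation of the asymptotics of $\sigma*\sigma*\sigma$ at a point, parametrizing $\Gamma$ near $p_0$ by arclength $s\mapsto\gamma(s)$ with $\gamma(0)=p_0$, writing the curvature as $\kappa(s)=\kappa(p_0) + \tfrac12\kappa''(p_0)s^2 + \cdots$ (the linear term vanishes since $p_0$ is a critical point of $\kappa$), and performing a stationary-phase / Jacobian analysis of the map $(s_1,s_2,s_3)\mapsto\gamma(s_1)+\gamma(s_2)+\gamma(s_3)$; the leading behavior of the density should be of the form $c(\kappa(p_0))\,|x - 3p_0|^{-1/2}$ times a correction whose sign near the extremal direction is governed precisely by the quantity $\kappa''(p_0) - 3\kappa(p_0)^3$. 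Third, with this expansion in hand, I would compute $\lim_{\delta\to0}\|\widehat{f_\delta\sigma}\|_6^6/\|f_\delta\|_2^6 =: \mathcal{N}(p_0)$ for the cap family, and — crucially — show that under \eqref{3geocond} this limit strictly exceeds the analogous quantity $\sup_{q\neq p_0}\mathcal{N}(q)$ obtained by concentrating elsewhere, and also exceeds the contribution of any "spread out" part of a test function. This is where the inequality $\kappa''(p_0)>3\kappa(p_0)^3$ enters with the correct sign: it forces the cap at $p_0$ to be strictly more efficient than anything else, so ${\mathbf C}[U] = \mathcal{N}(p_0)$ and the cap sequence $\{f_\delta\}$ is extremizing for $U$.

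**The main obstacle.**

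Finally, nonexistence on $U$ follows by a dichotomy argument: any extremizing sequence for ${\mathbf C}[U]$ either concentrates at $p_0$ — in which case it converges weakly to $0$ and cannot converge to a nonzero extremizer, and one checks directly that a putative extremizer $f$ would have to be supported at $p_0$, which is impossible in $L^2(\sigma)$ — or it does not concentrate at $p_0$, in which case by step (iii) its limiting value is strictly below $\mathcal{N}(p_0)={\mathbf C}[U]$, a contradiction. Hence ${\mathbf C}[U]$ is not attained. I expect the hard part to be step (ii)–(iii): pinning down the asymptotic expansion of $\sigma*\sigma*\sigma$ near the worst direction with enough precision to see the exact threshold $3\kappa^3$, and then proving the \emph{strict} inequality between the cap value at $p_0$ and every competing configuration. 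The latter requires a uniform comparison argument — for instance, a localization/almost-orthogonality decomposition of an arbitrary test function into a piece near $p_0$ and a remainder, together with the monotonicity of $\mathcal{N}(q)$ in terms of $\kappa(q)$ and its derivatives — and controlling the cross terms in the resulting expansion of the sextilinear form is the most technically demanding piece. The computation of $\sigma*\sigma*\sigma$ itself builds on, and refines, the analogous second-order expansion already used in \cite{OS} to handle the $3/2$ threshold, so the machinery is available; the novelty is extracting the next order of precision and the opposite-sign conclusion.
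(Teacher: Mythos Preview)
Your overall architecture is right and matches the paper's: reduce to a local problem near $p_0$, analyze the triple convolution $\sigma^{(\ast 3)}$, produce a concentrating extremizing sequence, and conclude nonexistence by a dichotomy. The triple-convolution computation you outline is exactly what the paper does, and it indeed yields that $\sigma^{(\ast 3)}$ has a \emph{strict global maximum} at the origin precisely when $\kappa''(p_0)>3\kappa(p_0)^3$ (equivalently $a>2(\lambda/2)^3$ in the local chart). So steps (i) and (ii) are fine in spirit.

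The gap is in step (iii), which you yourself flag as the hard part. You propose to show directly that the cap value $\mathcal N(p_0)$ strictly dominates every ``spread out'' configuration by decomposing an arbitrary test function and controlling cross terms in the sextilinear form. This is not how the paper proceeds, and your route is both harder and not obviously completable: controlling cross terms in a six-fold product with enough precision to yield a \emph{strict} inequality against $\mathcal N(p_0)$ is essentially as difficult as computing the optimal constant itself. The paper sidesteps this entirely with the Foschi--Quilodr\'an trick. First, H\"older (applied pointwise to the threefold convolution) gives the \emph{sharp} upper bound
\[
\|\widehat{f\sigma}\|_6^6 \;\le\; (2\pi)^2\,\|\sigma^{(\ast 3)}\|_{L^\infty}\,\|f\|_{L^2(\sigma)}^6,
\]
with equality for $f\neq 0$ only if $\sigma^{(\ast 3)}$ is a.e.\ constant on its support. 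Second, one computes $\|\sigma^{(\ast 3)}\|_{L^\infty}=\sigma^{(\ast 3)}(0,0)=\tfrac{2\pi}{\sqrt{3}\,\lambda}$ and recognizes this as exactly $(2\pi)^{-2}\mathbf C_F[\lambda]^6$, where $\mathbf C_F[\lambda]$ is Foschi's optimal constant for the osculating parabola. Third, a concentrating \emph{Gaussian} sequence (not a cap indicator) achieves $\mathbf C_F[\lambda]$ in the limit by the variational calculation in \cite{OS}. These three facts together force $\mathbf C[U]=\mathbf C_F[\lambda]$, and any extremizer would force equality in H\"older, hence $\sigma^{(\ast 3)}$ constant on its support --- contradicting the strict maximum you already established. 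This replaces your entire step (iii) with a two-line identity.

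Two smaller points. Your argument that a concentrating extremizing sequence ``converges weakly to $0$ and cannot converge to a nonzero extremizer'' does not by itself rule out the existence of some \emph{other} extremizer; you need the equality-case analysis above (or your proposed strict comparison) to exclude non-concentrating extremizers. And the paper uses Gaussians rather than cap indicators for the trial family precisely because Foschi's constant is attained in the limit by Gaussians; caps would give a smaller limiting value and would not close the argument.
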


 {\bf Notation.}
If $x,y$ are real numbers, we will write $x=O(y)$ or $x\lesssim y$ if there exists a finite constant $C$ such that $|x|\leq C|y|$. If we want to make explicit the dependence of the constant $C$  on some parameter $\alpha$, we will  write $x=O_\alpha(y)$ or $x\lesssim_\alpha y$. As is customary the constant $C$ is allowed to change from line to line.
If $\lambda\in\R$ and $A\subseteq\R^d$, we denote its $\lambda$-dilation by $\lambda\cdot A:=\{\lambda x: x\in A\}$. 
Sharp constants will always appear in bold face.
\vspace{.5cm}

{\bf Acknowledgments.} The author would like to thank his dissertation advisor, Michael Christ, for many helpful discussions. 

\section{Proof of Theorem \ref{extremizersdonotexist}}

As in \cite[\S 8.1]{OS}, we start by introducing suitable local coordinates. Let $p_0\in\Gamma$ be a point of minimum curvature. After a suitable translation we lose no generality in assuming that $p_0=(0,0)$. Possibly after rotation, a neighborhood of $p_0$ in the arc $\Gamma$ can then be parametrized in the following way:\footnote{
There is no cubic term in the expression for $g$ because by assumption the curvature has a minimum at $\gamma(0)=(0,0)=p_0$. Constant and linear terms were likewise removed via the affine change of variables just described.}
\begin{displaymath}
\begin{array}{rcl}\label{parametrization}
\gamma: [-r,r] & \rightarrow & \mathbb{R}^2\\
y & \mapsto &  \gamma(y)=\Big(y,g(y)=\frac{\lambda}{2}y^2+ay^4+\varphi(y)\Big),
\end{array}
\end{displaymath}
where $r>0$ and $\lambda:=\kappa(p_0)=\min_\Gamma \kappa>0$; the function $\varphi$ is real-valued, smooth, and it satisfies $\varphi(y)=O(|y|^5)$ as $|y|\rightarrow 0$; the parameter $a$ is a function of the second derivative of the curvature with respect to arclength at 0. Indeed, it can be readily checked that
$$\frac{d^2\kappa}{ds^2}(0)=24a-3\lambda^3,$$
and so hypothesis \eqref{3geocond} in Theorem \ref{extremizersdonotexist} is equivalent to $a>2(\frac{\lambda}{2})^3$.
Incidentally\footnote{We shall abuse notation slightly and henceforth denote by $(\Gamma,\sigma)$ the $r$-neighborhood of $p_0=(0,0)$ parametrized by $\gamma$ as indicated, equipped with the corresponding arclength measure.}
 note that the curvature $\kappa$ of $\Gamma$ at a point $(y,g(y))\in\Gamma$ is given by
$$\kappa(y)=\frac{g''(y)}{(1+g'(y)^2)^{3/2}}=\frac{\lambda+12 a y^2+\varphi''(y)}{(1+\lambda^2 y^2+8a \lambda y^4+16 a^2 y^6+2\lambda y\varphi'(y)+8a y^3\varphi'(y)+\varphi'(y)^2)^{3/2}},$$
and one can easily check that $\kappa$ attains a minimum at the origin $(0,0)\in\Gamma$ if and only if $a\geq(\frac{\lambda}{2})^3$.

The Tomas-Stein inequality states that there exists a finite constant $\mathbf C=\mathbf C_{r,\lambda,a,\varphi}<\infty$ such that
\begin{equation}\label{specialcaprestriction}
\|\widehat{f\sigma}\|_{L^6(\R^2)}\leq{\mathbf C}\|f\|_{L^2(\sigma)},\;\;\textrm{ for every }f\in L^2(\sigma);
\end{equation}
\noindent  by ${\mathbf C}=\sup_{0\neq f\in L^2}\|\widehat{f\sigma}\|_6 \|f\|_2^{-1}$ we mean the optimal constant as usual. Extremizing sequences and extremizers for inequality \eqref{specialcaprestriction} can be defined in a completely analogous way as before.

Theorem \ref{extremizersdonotexist} will follow once we establish the following result:

\begin{theorem}\label{extremizersdonotexistmodified}
Let $\lambda, a>0$ be such that $a>2(\frac{\lambda}{2})^3$, and consider the curve $({\Gamma},{\sigma})$ parametrized by $\gamma$ as above (in terms of $r,\lambda, a$ and $\varphi$). Then there exists $r_0>0$ such that for every $r<r_0$, the triple convolution ${\sigma}\ast{\sigma}\ast{\sigma}$ attains a strict global maximum at the origin.
As a consequence, if $r<r_0$, every extremizing sequence concentrates at the origin. In particular, there are no extremizers for inequality \eqref{specialcaprestriction}.  
\end{theorem}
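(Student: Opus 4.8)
\emph{Overview and reduction.} I would first isolate the analytic core of the statement---that the density of $\sigma\ast\sigma\ast\sigma$, extended continuously up to the boundary of its support, has a strict global maximum at the origin once $r$ is small---and then deduce concentration and nonexistence from it by a Cauchy--Schwarz argument of the type by now standard in this circle of problems. For any $f\in L^2(\sigma)$, Plancherel together with the identity $\widehat{f\sigma}^{\,3}=\widehat{(f\sigma)\ast(f\sigma)\ast(f\sigma)}$ gives $\|\widehat{f\sigma}\|_6^6=(2\pi)^2\|(f\sigma)\ast(f\sigma)\ast(f\sigma)\|_{L^2(\R^2)}^2$. Writing $(f\sigma)^{\ast 3}(x)$ by the coarea formula as an integral over $\Sigma_x:=\{(t_1,t_2,t_3):\gamma(t_1)+\gamma(t_2)+\gamma(t_3)=x\}$ against a measure $d\mu_x$ of total mass $(\sigma\ast\sigma\ast\sigma)(x)$, Cauchy--Schwarz on $\Sigma_x$ yields the pointwise bound $|(f\sigma)^{\ast 3}(x)|^2\le(\sigma\ast\sigma\ast\sigma)(x)\,(|f|^2\sigma)^{\ast 3}(x)$ and hence, after integrating in $x$, $\mathbf C^{6}\le(2\pi)^2\|\sigma\ast\sigma\ast\sigma\|_{\infty}$. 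For the matching lower bound I would test with functions concentrating at the origin at scale $\epsilon\to0$ (i.e.\ $f=\epsilon^{-1/2}\phi(\cdot/\epsilon)$ in the local coordinate): after the parabolic rescaling $(\xi_1,\xi_2)\mapsto(\epsilon\xi_1,\epsilon^2\xi_2)$ the extension problem converges to that of the osculating parabola $\{(t,\tfrac\lambda2t^2)\}$ carrying the pushforward of Lebesgue measure, whose triple convolution is identically $\tfrac{2\pi}{\lambda\sqrt3}$ on its support and for which the above Cauchy--Schwarz is an equality on Gaussians, so $\mathbf C^6\ge(2\pi)^2\tfrac{2\pi}{\lambda\sqrt3}$. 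Granting the main step below, which identifies $\|\sigma\ast\sigma\ast\sigma\|_\infty=\tfrac{2\pi}{\lambda\sqrt3}$, these two bounds give $\mathbf C^6=(2\pi)^2\|\sigma\ast\sigma\ast\sigma\|_\infty$.

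\emph{The main step.} Since $g\ge0$ with equality only at $0$, the origin lies on the boundary of the (compact) support of $\sigma\ast\sigma\ast\sigma$, which near $0$ is the cuspidal region $\{(u,v):v\gtrsim\tfrac\lambda6u^2\}$; the density extends continuously up to this lower boundary, with value $\tfrac{2\pi}{\kappa(u/3)\sqrt3}$ at the point $v=\tfrac\lambda6u^2$. I would parametrize $\Sigma_{(u,v)}$---a circle in $(t_1,t_2,t_3)$-space for the osculating parabola---carry along the Jacobian $\big(\textstyle\sum_{i<j}(g'(t_i)-g'(t_j))^2\big)^{-1/2}$ and the arclength weights $\prod_i(1+g'(t_i)^2)^{1/2}$, and extract the expansion
$$(\sigma\ast\sigma\ast\sigma)(u,v)=\frac{2\pi}{\lambda\sqrt3}\Big(1+\alpha\,v+\beta\,u^2\Big)+(\text{lower order on the support}),\qquad (u,v)\to0,$$
with $\alpha=\lambda-4a\lambda^{-2}$ and $\tfrac\lambda6\alpha+\beta=-(12a-\tfrac32\lambda^3)/(9\lambda)$, the latter consistent with the boundary value recorded above. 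The hypothesis $a>2(\tfrac\lambda2)^3=\tfrac{\lambda^3}{4}$ forces $\alpha<0$, hence also $\tfrac\lambda6\alpha+\beta<0$, so from $\alpha v+\beta u^2=\alpha\big(v-\tfrac\lambda6u^2\big)+\big(\tfrac\lambda6\alpha+\beta\big)u^2$ the bracketed factor is $<1$ on the support away from the origin. Choosing $r_0$ small enough that for $r<r_0$ the whole support lies in the region where this expansion controls the density, one concludes that $\sigma\ast\sigma\ast\sigma$ attains a strict global maximum $\tfrac{2\pi}{\lambda\sqrt3}$ at the origin, with $(\sigma\ast\sigma\ast\sigma)(x)$ dropping below $\tfrac{2\pi}{\lambda\sqrt3}$ by a fixed amount outside any neighborhood of $0$. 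This is the step I expect to be by far the hardest: the expansion must be made genuinely uniform over the whole support---in particular near the caustic of $\sigma\ast\sigma$ and near the lateral edges $t_i=\pm r$---and it is exactly here that the strict inequality in the hypothesis is used (at $a=\tfrac{\lambda^3}{4}$ one would have $\alpha=0$ and the maximum would be attained along a whole segment rather than at a point).

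\emph{Concentration and nonexistence.} Let $\{f_n\}$ be an extremizing sequence; then necessarily $\|f_n\|_2\to1$, and inserting $\{f_n\}$ into the Cauchy--Schwarz chain forces all the inequalities there to become asymptotic equalities. In particular $\int(|f_n|^2\sigma)^{\ast 3}(x)\big(\|\sigma\ast\sigma\ast\sigma\|_\infty-(\sigma\ast\sigma\ast\sigma)(x)\big)\,dx\to0$, and since the second factor is bounded below by a positive constant off any neighborhood of $0$, the (asymptotically) probability measures $(|f_n|^2\sigma)^{\ast 3}$ converge weakly to $\delta_0$. Because the second coordinate is additive under convolution, $\int x_2\,d(|f_n|^2\sigma)^{\ast 3}$ equals, up to a factor tending to $1$, $3\int g(y)|f_n(\gamma(y))|^2\|\gamma'(y)\|\,dy$, so the latter tends to $0$; as $g(y)>0$ for $0<|y|\le r$, this yields $\int_{|y|\ge r_1}|f_n(\gamma(y))|^2\|\gamma'(y)\|\,dy\to0$ for every $r_1>0$, i.e.\ $\{f_n\}$ concentrates at the origin. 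Finally, were $f$ an extremizer, the constant sequence $f_n\equiv f$ would be extremizing and would therefore concentrate at the origin, which---being independent of $n$---forces $f=0$ $\sigma$-a.e., a contradiction. Hence no extremizers exist, and together with the earlier steps this proves Theorem~\ref{extremizersdonotexistmodified}, and thus Theorem~\ref{extremizersdonotexist}.
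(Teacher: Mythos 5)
Your outline is correct and, at its analytic core, coincides with the paper's: the Cauchy--Schwarz/H\"older chain giving $\mathbf C^6\le(2\pi)^2\|\sigma^{(\ast 3)}\|_\infty$, the matching lower bound via rescaled Gaussians and the osculating parabola, and a quadratic expansion of the triple-convolution density at the origin whose coefficients you have exactly right ($\alpha=\lambda-4a\lambda^{-2}$ and $\tfrac\lambda6\alpha+\beta=\tfrac{\lambda^2}{6}-\tfrac{4a}{3\lambda}$ match the paper's Hessian entries $2\tfrac{2\pi}{\sqrt3}(1-\tfrac{4a}{\lambda^3})$ and $\tfrac{2\pi}{\sqrt3}(\tfrac\lambda3-\tfrac{8a}{3\lambda^2})$ after the change of scale). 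Where you genuinely diverge is in the endgame: the paper deduces nonexistence from the rigidity statement that equality throughout the H\"older chain forces $\sigma^{(\ast 3)}$ to be a.e.\ constant on its support (citing Quilodr\'an), and gets concentration by invoking a dichotomy and a cap-comparison result from the companion paper; your first-moment argument (additivity of the second coordinate under convolution, plus $g>0$ away from $0$) derives concentration directly from the weak convergence $(|f_n|^2\sigma)^{\ast 3}\rightharpoonup\delta_0$, and then kills extremizers by applying concentration to a constant sequence. That route is more self-contained and is a nice simplification.

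The one place where your proposal is materially thinner than the paper is the form of the expansion itself. Writing $\epsilon^2:=v-3g(u/3)$, terms of size $\epsilon$ and $u\epsilon$ are \emph{not} lower order relative to $\alpha v+\beta u^2\sim \epsilon^2+u^2$, so the claimed expansion silently asserts that the density has no half-power contributions in the distance to the lower boundary. This is exactly the delicate point: the raw formula for $\sigma^{(\ast 3)}$ is continuous but not differentiable in $\tau$ at the boundary, and the paper handles it by substituting $\rho=\rho_{\xi,\theta}(\epsilon)$, proving the resulting function $F(\xi,\epsilon)$ is $C^\infty$ up to $\epsilon=0$, and then verifying $\partial_\epsilon F(\xi,0^+)=0$ for all small $\xi$ through two cancellations: $\alpha(\theta)+\beta(\theta)+\gamma(\theta)=0$ (killing the contribution of the weight $\widetilde{\mathfrak G}_r$) and $\int_0^{2\pi}\sin 3\theta\,d\theta=0$ (killing the contribution of the Jacobian factor $\rho/\partial_\rho\psi$). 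The latter also disposes of the $u\epsilon$ cross term, which is why the Hessian is diagonal. You correctly flag uniformity over the support and the lateral edges as the hard part, but this boundary-regularity issue is the specific obstruction you would need to address to make the ``main step'' rigorous; with it in place, your decomposition $\alpha(v-\tfrac\lambda6u^2)+(\tfrac\lambda6\alpha+\beta)u^2$ with both coefficients negative is precisely the paper's second-derivative test.
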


To prove Theorem \ref{extremizersdonotexistmodified}, the strategy will be to analyze the explicit formula for the triple convolution $\sigma\ast\sigma\ast\sigma$ obtained in \cite[\S 7]{OS} and use it to show that a suitable modification thereof is sufficiently regular inside its support.\footnote{For an analogous reasoning in the bilinear setting, see \cite{BMcM} and the references therein, especially \cite{Fef}.} We compute its partial derivatives of order 2 at the critical point $(0,0)$ in order to apply the second derivative test.  The last  assertions of the theorem follow via an argument based on ideas of Foschi \cite{F} and Quilodr\'an \cite{Q2}.

\begin{proof}[Proof of Theorem \ref{extremizersdonotexistmodified}] 
Let $\eta_r\in C_0^\infty(\R)$ be a mollified version of the characteristic function of the interval $[-r,r]$: to accomplish this, fix $\eta\in C_0^\infty(\R)$ such that $\eta\equiv 1$ on $[-1,1]$ and $\eta(\xi)=0$ if $|\xi|\geq 2$, and define $\eta_r:=\eta(r^{-1}\cdot)$.
If $|\xi|\leq r$, we have that 
$$\sigma(\xi,\tau)=G_r(\xi)\delta(\tau-g(\xi))d\xi d\tau,$$
where $G_r(y):=(1+g'(y)^2)^{1/2}\eta_r(y)$ is a smooth function inside its support $\{y\in\R: |y|\lesssim r\}$. 

For $\theta\in [0,2\pi]$, consider the quantities
$$\alpha(\theta)=-\frac{\cos\theta}{\sqrt{2}}-\frac{\sin\theta}{\sqrt{6}},\; \beta(\theta)=\frac{\cos\theta}{\sqrt{2}}-\frac{\sin\theta}{\sqrt{6}}\textrm{ and }\; \gamma(\theta)=2\frac{\sin\theta}{\sqrt{6}},$$
which satisfy the relations
\begin{align}
&\alpha(\theta)+\beta(\theta)+\gamma(\theta)=0;\label{abc1}\\
&\alpha(\theta)^2+\beta(\theta)^2+\gamma(\theta)^2=1;\label{abc2}\\
&\alpha(\theta)^3+\beta(\theta)^3+\gamma(\theta)^3=-{\sin(3\theta)}/\sqrt{6};\label{abc3}\\
&\alpha(\theta)^4+\beta(\theta)^4+\gamma(\theta)^4={1}/{2}.\label{abc4}
\end{align}

 Let $|\xi|\leq r$ and $0\leq\rho\lesssim r$. The following functions will appear in the expression \eqref{tripleconvolution} for $\sigma\ast\sigma\ast\sigma$ below:
\begin{equation}\label{quarticG}
\widetilde{\mathfrak{G}}_r(\xi;\rho,\theta):=G_r\Big(\frac{\xi}{3}+\alpha(\theta)\rho\Big)G_r\Big(\frac{\xi}{3}+\beta(\theta)\rho\Big)G_r\Big(\frac{\xi}{3}+\gamma(\theta)\rho\Big);
\end{equation}

\begin{multline}
\psi(\xi;\rho,\theta):=\frac{\lambda}{2}\rho^2+\frac{2}{3}a\xi^2\rho^2-\Big(\frac{2}{3}\Big)^{3/2}a\xi\sin(3\theta)\rho^3+\frac{a}{2}\rho^4\label{quarticpsi}\\
+\varphi\Big(\frac{\xi}{3}+\alpha(\theta)\rho\Big)+\varphi\Big(\frac{\xi}{3}+\beta(\theta)\rho\Big)+\varphi\Big(\frac{\xi}{3}+\gamma(\theta)\rho\Big)-3\varphi(\xi/3).
\end{multline}

\noindent Using Taylor's formula and relations \eqref{abc1}$-$\eqref{abc4}, we can rewrite the summands of the function $\psi$ which involve the higher order term $\varphi$ as:
\begin{multline}
\varphi\Big(\frac{\xi}{3}+\alpha(\theta)\rho\Big)+\varphi\Big(\frac{\xi}{3}+\beta(\theta)\rho\Big)+\varphi\Big(\frac{\xi}{3}+\gamma(\theta)\rho\Big)-3\varphi(\xi/3)=\frac{\varphi''(\xi/3)}{2}\rho^2\label{phiTaylor}\\
-\frac{\varphi^{(3)}(\xi/3)}{6\sqrt{6}}\sin(3\theta)\rho^3+\frac{\varphi^{^{(4)}}(\xi/3)}{48}\rho^4+O_{\xi,\theta}(\rho^5).
\end{multline}

The function $\widetilde{\mathfrak{G}}_r$ is smooth inside its support, which is compact in all three variables. It will not give us much trouble.
On the other hand, the function $\psi$  defines a smooth function in all its variables, but we will need to somehow invert it. This is a somewhat delicate procedure, a variant of which already showed up in \cite[Appendix 1]{OS}. 

 From the proof of \cite[Proposition 24]{OS}, we have that the triple convolution $\sigma^{(\ast 3)}:=\sigma\ast\sigma\ast\sigma$ is given by
\begin{equation}\label{tripleconvolution}
\sigma^{(\ast 3)}  (\xi,\tau)=\frac{\chi(|\xi|\lesssim r)}{\sqrt{3}}\int_0^{2\pi}\int_{0\leq\rho\lesssim r}\widetilde{\mathfrak{G}}_r(\xi;\rho,\theta)\delta\Big(\tau-3 g({\xi}/{3})-\psi(\xi;\rho,\theta)\Big)\rho d\rho d\theta.
\end{equation}

\noindent We introduce a change of variables which is related but not identical to the one described in the proof of  \cite[Proposition 24]{OS}.
Start by noting that, for $v\geq 0$, the equation $\psi(\xi;\rho,\theta)=v^2$ can be uniquely solved for $\rho=\rho_{\xi,\theta}(v)\geq 0$ in a sufficiently small right half-neighborhood of the origin. Recalling \eqref{phiTaylor}, we may write the equation in the equivalent form 
\begin{multline*}
v=v_{\xi,\theta}(\rho)=\\
=\rho\Big(\frac{\lambda}{2}+\frac{2}{3}a\xi^2+\frac{\varphi''(\xi/3)}{2}-\Big((\frac{2}{3})^{3/2}a\xi+\frac{\varphi^{(3)}(\xi/3)}{6\sqrt{6}}\Big)\sin(3\theta)\rho+\Big(\frac{a}{2}+\frac{\varphi^{(4)}(\xi/3)}{48}\Big)\rho^2+O(\rho^3)\Big)^{1/2}.
\end{multline*}
One sees that $v$ defines a $C^\infty$ diffeomorphism from $[0,r]$ onto its image, provided $r>0$ is chosen sufficiently small (as a function of $\lambda, a$ and $\varphi$).
Moreover, $r$ can be chosen to ensure  
$$\frac{\partial v}{\partial\rho}(\rho)>0\textrm{ if }\rho\in [0,r].$$

\noindent By the inverse function theorem,
$\rho=\rho_{\xi,\theta}(v)$ will then define a smooth function of $v$ on $v([0,r])\subset [0,\infty)$, and $\partial_v \rho(v)>0$ on the same interval. In particular,
\begin{equation}\label{rhoprimeatzero}
\rho'_{\xi,\theta}(0)=\frac{1}{v'_{\xi,\theta}(0)}=\frac{1}{(\lambda/2+2/3a\xi^2+\varphi''(\xi/3)/2)^{1/2}}>0.
\end{equation}

\noindent Changing variables $\rho=\rho_{\xi,\theta}(v)$ in \eqref{tripleconvolution}, we have that
\begin{multline}
\sigma^{(\ast 3)}  (\xi,\tau)=\frac{\chi(|\xi|\lesssim r)}{\sqrt{3}}\int_0^{2\pi}\int_{0\leq v\leq C\psi^{1/2}}\widetilde{\mathfrak{G}}_r(\xi;\rho_{\xi,\theta}(v),\theta)\cdot\label{firstpart}\\
\cdot\delta\Big(\tau-3g({\xi}/{3})-v^2\Big)
\frac{\rho_{\xi,\theta}(v)}{\partial_\rho \psi(\xi;\rho_{\xi,\theta}(v), \theta)} 2v dv d\theta,
\end{multline}
and so

\begin{equation}\label{triplesqrt}
\sigma^{(\ast 3)} (\xi,\tau)=\frac{\chi(|\xi|\lesssim r)}{\sqrt{3}}\int_0^{2\pi}  \widetilde{\mathfrak{G}}_r(\xi;\rho_{\xi,\theta}(\sqrt{\tau-3g({\xi}/{3})}),\theta) \frac{\rho_{\xi,\theta}(\sqrt{\tau-3g({\xi}/{3})})}{\partial_\rho\psi(\xi;\rho_{\xi,\theta}(\sqrt{\tau-3g({\xi}/{3})}),\theta)}d\theta.
\end{equation}
For $\theta\in [0,2\pi]$, the integrand in \eqref{triplesqrt} is supported in the region
$$\{(\xi,\tau)\in\R^2: 0\leq {\tau-3g({\xi}/{3})}\leq C^2\psi(\xi;r,\theta)\},$$
where the constant $C<\infty$ is large enough that the restriction $v\leq C\psi(\xi;r,\theta)^{1/2}$ in the inner integral of \eqref{firstpart} becomes redundant because of support limitations on factors present in the integrand. 

As observed in the course of the proof of \cite[Proposition 24]{OS}, expression \eqref{triplesqrt} defines a continuous function of the variables $\xi,\tau$. However, it is {\em not}  a differentiable function of $\tau$ at $\tau=0$.
To remedy this, introduce a new parameter $\epsilon\geq 0$ defined by the equation
$$\epsilon^2:=\tau-3g({\xi}/{3})=\tau-\frac{\lambda}{6}\xi^2-\frac{a}{27}\xi^4-3\varphi({\xi}/{3}).$$ 
The geometric significance of $\epsilon$ is clear: for $|\xi|\leq 3r$ and $\tau\geq 3g(\xi/3)$, it measures the (square root of the) vertical distance from the point $(\xi,\tau)\in\R^2$ to the ``lower boundary'' of the support of the triple convolution $\sigma^{(\ast 3)}$, given by
$$\Big\{(\xi,\tau)\in\R^2: \tau=3g({\xi}/{3})\Big\}.$$ 
For $\epsilon\geq 0$, define the function

\begin{equation}\label{tripleconvolutiontilde}
F (\xi,\epsilon):=\frac{\chi(|\xi|\lesssim r)}{\sqrt{3}}\int_0^{2\pi}  \widetilde{\mathfrak{G}}_r(\xi;\rho_{\xi,\theta}(\epsilon),\theta) \frac{\rho_{\xi,\theta}(\epsilon)}{\partial_\rho\psi(\xi;\rho_{\xi,\theta}(\epsilon),\theta)}d\theta.
\end{equation}

\noindent In view of \eqref{triplesqrt}, a sufficient condition for the convolution $\sigma\ast\sigma\ast\sigma$ to have a strict local maximum at $(\xi,\tau)=(0,0)$ is that the function $F$ has a strict local maximum at $(\xi,\epsilon)=(0,0)$.
The advantage of considering  \eqref{tripleconvolutiontilde} instead of \eqref{triplesqrt} lies in its extra regularity, which is needed to justify the computations that will follow:

\begin{claim}\label{C^2}
There exists  $r_1>0$ such that expression \eqref{tripleconvolutiontilde} defines a $C^\infty$ function of both $\xi$ and $\epsilon$ in the rectangle
$$
\{(\xi,\epsilon)\in\R^2: |\xi|\leq r_1, 0\leq \epsilon\leq r_1\}.$$
\end{claim}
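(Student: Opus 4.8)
The plan is to establish smoothness of $F$ by tracking the regularity of each ingredient appearing in the integrand of \eqref{tripleconvolutiontilde}, and then noting that smoothness is preserved under integration in $\theta$ over the compact interval $[0,2\pi]$. The three pieces to control are: (i) the factor $\widetilde{\mathfrak{G}}_r(\xi;\rho_{\xi,\theta}(\epsilon),\theta)$; (ii) the numerator $\rho_{\xi,\theta}(\epsilon)$; and (iii) the reciprocal of the denominator $\partial_\rho\psi(\xi;\rho_{\xi,\theta}(\epsilon),\theta)$. The crux of the matter is the map $(\xi,\epsilon,\theta)\mapsto\rho_{\xi,\theta}(\epsilon)$, which is obtained by inverting the relation $v=v_{\xi,\theta}(\rho)$ from the excerpt. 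Once that map is known to be jointly $C^\infty$, pieces (i) and (iii) follow by composition: $\widetilde{\mathfrak{G}}_r$ is smooth in all three of its arguments (it is a product of shifts of the smooth, compactly supported function $G_r$), $\psi$ is smooth in all its variables, and $\partial_\rho\psi$ is bounded away from zero on the relevant range by the condition $\partial v/\partial\rho>0$ on $[0,r]$ (equivalently $\partial_\rho\psi>0$), which was arranged by choosing $r$ small; hence $1/\partial_\rho\psi$ is smooth there too.

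So the heart of the proof is to show that $\rho_{\xi,\theta}(\epsilon)$ is jointly smooth in $(\xi,\epsilon,\theta)$ on a rectangle $|\xi|\le r_1$, $0\le\epsilon\le r_1$ (uniformly in $\theta$). First I would observe that the equation $\psi(\xi;\rho,\theta)=\epsilon^2$ can be divided by $\rho^2$ to yield $\epsilon^2/\rho^2 = h(\xi;\rho,\theta)$, where, in view of \eqref{quarticpsi} and \eqref{phiTaylor},
\begin{equation*}
h(\xi;\rho,\theta)=\frac{\lambda}{2}+\frac{2}{3}a\xi^2+\frac{\varphi''(\xi/3)}{2}-\Bigl(\bigl(\tfrac23\bigr)^{3/2}a\xi+\frac{\varphi^{(3)}(\xi/3)}{6\sqrt6}\Bigr)\sin(3\theta)\,\rho+\Bigl(\frac{a}{2}+\frac{\varphi^{(4)}(\xi/3)}{48}\Bigr)\rho^2+\rho^3 R(\xi;\rho,\theta),
\end{equation*}
with $R$ smooth in all variables (the remainder $O_{\xi,\theta}(\rho^5)$ in \eqref{phiTaylor} being $\rho^5$ times a smooth function, since $\varphi$ is smooth and the Taylor expansion can be carried with an integral-form remainder). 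Note $h(0;0,\theta)=\lambda/2>0$, so $h>0$ on a neighborhood and we may set $u:=\epsilon/\sqrt{h(\xi;\rho,\theta)}$, which is a smooth function of $(\xi,\rho,\theta,\epsilon)$; the equation becomes simply $\rho = u$. Thus I would apply the implicit function theorem to $\Phi(\xi,\rho,\theta,\epsilon):=\rho\sqrt{h(\xi;\rho,\theta)}-\epsilon$: we have $\Phi=0$ at $(\xi,\rho,\theta,\epsilon)=(0,0,\theta,0)$ for every $\theta$, and $\partial_\rho\Phi(0,0,\theta,0)=\sqrt{h(0;0,\theta)}=\sqrt{\lambda/2}\ne 0$. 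The implicit function theorem then gives a unique solution $\rho=\rho_{\xi,\theta}(\epsilon)$, jointly $C^\infty$ in $(\xi,\epsilon,\theta)$ on a product neighborhood; by compactness of $\theta\in[0,2\pi]$ this neighborhood can be taken uniform in $\theta$, producing the rectangle with some $r_1>0$. This is consistent with \eqref{rhoprimeatzero}, since $\partial_\epsilon\rho_{\xi,\theta}(0)=1/\sqrt{h(\xi;0,\theta)}=(\lambda/2+2a\xi^2/3+\varphi''(\xi/3)/2)^{-1/2}$.

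With $\rho_{\xi,\theta}(\epsilon)$ in hand, smoothness of $F$ follows: the integrand in \eqref{tripleconvolutiontilde} is a composition and product of smooth functions on the rectangle (shrinking $r_1$ once more if needed so that the arguments $\xi/3+\alpha(\theta)\rho_{\xi,\theta}(\epsilon)$ etc.\ stay in the region $|y|\lesssim r$ where $G_r$ is smooth, and so that $\partial_\rho\psi$ stays positive), it and all its $\xi,\epsilon$-derivatives are continuous in $(\xi,\epsilon,\theta)$ and hence bounded on the compact set $[-r_1,r_1]\times[0,r_1]\times[0,2\pi]$, so differentiation under the integral sign is justified and $F\in C^\infty$. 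The main obstacle I anticipate is purely bookkeeping: extracting the $\rho^2$ factor cleanly from $\psi$ and checking that the remainder term in \eqref{phiTaylor} genuinely contributes a smooth $\rho^3 R(\xi;\rho,\theta)$ (this requires writing the Taylor remainder in integral form and using smoothness of $\varphi$), together with verifying that all the compositions respect the support constraints uniformly in $\theta$. None of this is deep, but it is where care is required; the qualitative smoothness itself is a routine implicit-function-theorem argument once the equation is put in the form $\rho=u$ with $u$ smooth and non-degenerate.
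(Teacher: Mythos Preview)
Your overall strategy is sound and essentially the same as the paper's: establish joint $C^\infty$ smoothness of $(\xi,\epsilon,\theta)\mapsto\rho_{\xi,\theta}(\epsilon)$ via an implicit/inverse function theorem argument (the paper does this just before the claim, with less detail than you supply), then argue the integrand is smooth in $(\xi,\epsilon,\theta)$ and differentiate under the $\theta$-integral. Your treatment of the implicit-function step, including the uniformity in $\theta$ via compactness, is in fact more carefully spelled out than in the paper.

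There is, however, a genuine slip in your handling of piece (iii). You assert that $\partial_\rho\psi$ is bounded away from zero, citing $\partial v/\partial\rho>0$ on $[0,r]$ as equivalent to $\partial_\rho\psi>0$. But $v^2=\psi$ gives $\partial_\rho\psi=2v\,\partial_\rho v$, which \emph{vanishes} at $\rho=0$ (equivalently $\epsilon=0$) regardless of $\partial_\rho v$. So $1/\partial_\rho\psi$ is not smooth at $\epsilon=0$, and treating the numerator $\rho$ and the reciprocal $1/\partial_\rho\psi$ as separate smooth factors does not work: it is precisely their quotient that extends smoothly across $\epsilon=0$, and this is the one subtlety the claim is meant to address.

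The repair is immediate using the very factorization you already wrote down: since $\psi=\rho^2 h(\xi;\rho,\theta)$ with $h$ smooth and $h(\xi;0,\theta)=\tfrac{\lambda}{2}+\tfrac{2}{3}a\xi^2+\tfrac{1}{2}\varphi''(\xi/3)>0$, one has $\partial_\rho\psi=\rho\,(2h+\rho\,\partial_\rho h)$ and hence
\[
\frac{\rho_{\xi,\theta}(\epsilon)}{\partial_\rho\psi(\xi;\rho_{\xi,\theta}(\epsilon),\theta)}=\frac{1}{\bigl(2h+\rho\,\partial_\rho h\bigr)(\xi;\rho_{\xi,\theta}(\epsilon),\theta)},
\]
whose denominator is smooth in $(\xi,\rho,\theta)$ and bounded below near the origin (it equals $\lambda+\tfrac{4}{3}a\xi^2+\varphi''(\xi/3)+O(\rho)$). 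This is exactly what the paper records as \eqref{rho/drho}; it verifies positivity of that denominator for all small $(\xi,\epsilon)$ via the AM--GM bound $\tfrac{4}{3}a\xi^2+2a\rho^2\ge \tfrac{2\sqrt{2}}{\sqrt{3}}a|\xi|\rho\ge \tfrac{2\sqrt{2}}{\sqrt{3}}a\xi\sin(3\theta)\rho$, which controls the only sign-indefinite term. After composing with your smooth $\rho_{\xi,\theta}(\epsilon)$, the quotient is $C^\infty$ in $(\xi,\epsilon,\theta)$, and the rest of your argument goes through unchanged.
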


\begin{proof}[Proof of Claim \ref{C^2}] 
All the work has basically been done. 
Since $\rho$ is a smooth function of $\epsilon$,
the function $\widetilde{\mathfrak{G}}_r$ is smooth in the variables $\xi,\epsilon$ and $\theta$; it is also  compactly supported in all its variables. 
Noting that $\rho(\epsilon)=O(\epsilon)$, we compute:
\begin{multline}
\frac{\rho_{\xi,\theta}(\epsilon)}{\partial_{\rho}\psi(\xi;\rho_{\xi,\theta}(\epsilon),\theta)}=\label{rho/drho}\\
=\frac{1}{\lambda+\frac{4}{3} a \xi^2+\varphi''(\xi/3)-\Big(\frac{2\sqrt{2}}{\sqrt{3}}a\xi+\frac{\varphi^{(3)}(\xi/3)}{2\sqrt{6}}\Big)\sin (3\theta) \rho_{\xi,\theta}(\epsilon) +\Big(2 a+\frac{\varphi^{(4)}(\xi/3)}{12}\Big) \rho_{\xi,\theta}(\epsilon)^2+O(\epsilon^3)}.
\end{multline}
If $r_1>0$ is chosen sufficiently small, then $\rho/\partial_\rho\psi(\rho)$  defines a positive, smooth function of $\xi$ (trivial) and $\epsilon$ which is bounded above uniformly by $1/\lambda$. 
Indeed, the denominator in \eqref{rho/drho} is never zero as long as $\xi,\epsilon$ are small enough, and moreover we have that

\begin{equation*}
\frac{4}{3}a \xi^2 +2 a \rho_{\xi,\theta}(\epsilon)^2\geq 2\frac{2\sqrt{2}}{\sqrt{3}}a|\xi| \rho_{\xi,\theta}(\epsilon)\geq \frac{2\sqrt{2}}{\sqrt{3}}a|\xi|\sin (3\theta) \rho_{\xi,\theta}(\epsilon).
\end{equation*}
This shows that, up to cubic terms, the denominator in \eqref{rho/drho} is  equal to $\lambda$ plus a nonnegative term, as claimed.
\end{proof}
 
We now compute the Hessian $D^2 F$ of the function $F$ at the critical point $(\xi,\epsilon)=(0,0)$. For this computation,  we lose no generality in assuming, as we will, that $\varphi\equiv 0$. In fact,  when we compute {first} and {second} derivatives of $F$ at the {origin}, all the terms involving the function $\varphi$ vanish.\footnote{Recall that $\varphi(y)=O(|y|^5)$ as $|y|\rightarrow 0$.} The details are straightforward to verify.

Consider  the  case $\xi=0$. Then $\psi(0;\rho,\theta)$ does not depend on $\theta$ and can be explicitly inverted.\footnote{This algebraic trick simplifies matters greatly and is not available if, say, the function $g$ contains a quintic term of the form $by^5$.} In fact, $\psi(0;\rho,\theta)=\psi(\rho)=\frac{\lambda}{2}\rho^2+\frac{a}{2}\rho^4$ and $\partial_\rho \psi(\rho)=\lambda \rho+ 2 a \rho^3$.
Since $\psi(\rho)=\epsilon^2$ and $\rho\geq 0$,
\begin{equation}\label{expressionforrho}
\rho(\epsilon)=\sqrt{-\frac{\lambda}{2a}+\frac{\sqrt{\lambda^2+8 a \epsilon^2}}{2a}}.
\end{equation}
It follows that 
\begin{equation}\label{expressionforquotientsofrho}
\frac{\rho(\epsilon)}{\partial_\rho \psi(\rho(\epsilon))}=
\frac{1}{\lambda+2a\rho(\epsilon)^2}=\frac{1}{\sqrt{\lambda^2+8 a \epsilon^2}};
\end{equation}
as noticed before, this defines a smooth, positive function of $\epsilon$ which is bounded above uniformly by
$1/\lambda$. 
In particular, for $\epsilon\leq C\psi(r)^{1/2}$, we have from \eqref{tripleconvolutiontilde} and \eqref{expressionforquotientsofrho} that

 \begin{equation}\label{tripleatxi0}
F(0,\epsilon)=\frac{1}{\sqrt{3}}\int_0^{2\pi}  \widetilde{\mathfrak{G}}_r(0;\rho(\epsilon),\theta) \frac{\rho(\epsilon)}{\partial_\rho\psi(\rho(\epsilon))}d\theta= \frac{1}{\sqrt{3}\sqrt{\lambda^2+8 a \epsilon^2}}\int_0^{2\pi}  \widetilde{\mathfrak{G}}_r(0;\rho(\epsilon),\theta) d\theta.
\end{equation}

\noindent To the best of our knowledge, the integral in this last expression
cannot be explicitly evaluated. We can, however, Taylor expand it. For $|\zeta|\leq r$, we have that $\eta_r(\zeta)\equiv 1$ and 
\begin{equation}\label{TaylorGr}
G_r(\zeta)=G(\zeta)=({1+g'(\zeta)^2})^{1/2}=({1+(\lambda\zeta+4 a \zeta^3)^2})^{1/2}=1+\frac{\lambda^2}{2}\zeta^2+O(\zeta^4).
\end{equation}
By \eqref{quarticG}, we have that
$$\widetilde{\mathfrak{G}}_r(0;\rho(\epsilon),\theta)=G_r(\alpha(\theta)\rho(\epsilon))\cdot G_r(\beta(\theta)\rho(\epsilon))\cdot G_r(\gamma(\theta)\rho(\epsilon)).$$
Using the approximation given by \eqref{TaylorGr} on each of these three factors, recalling the relation \eqref{abc2}, and integrating with respect to the angular variable $\theta$, we get that
$$\int_0^{2\pi}  \widetilde{\mathfrak{G}}_r(0;\rho(\epsilon),\theta)d\theta=2\pi\Big(1+\frac{\lambda^2}{2}\rho(\epsilon)^2+O(\rho(\epsilon)^4)\Big)$$

\noindent if $\epsilon\leq C\psi(r)^{1/2}$ and $r>0$ is sufficiently small. Plugging this  into \eqref{tripleatxi0} and recalling that $\rho(\epsilon)=O(\epsilon)$, one gets that

 \begin{equation}\label{secondapproximation}
F (0,\epsilon)=\frac{2\pi}{\sqrt{3}}\cdot \frac{1}{\sqrt{\lambda^2+8 a \epsilon^2}}\Big(1+\frac{\lambda^2}{2}\rho(\epsilon)^2\Big)+O(\epsilon^4)\;\;\textrm{ if } \epsilon\leq C\psi(r)^{1/2}.
\end{equation}

\noindent We have an explicit expression for $\rho(\epsilon)$ given by \eqref{expressionforrho}. Using it, one finally obtains
\begin{equation}\label{firstderivative}
\frac{\partial}{\partial\epsilon}\Big|_{\epsilon=0^+}F(0,\epsilon)=0
\end{equation}
and
\begin{equation}\label{secondderivative}
\frac{\partial^2}{\partial\epsilon^2}\Big|_{\epsilon=0^+}F(0,\epsilon)=2\frac{2\pi}{\sqrt{3}}\Big(1-\frac{4a}{\lambda^3}\Big).
\end{equation}

\noindent Expression \eqref{secondderivative} defines a negative quantity if and only if $a>2(\frac{\lambda}{2})^3$. This turns out to be a necessary and sufficient condition for  the  function $F$ to have a strict local maximum at the origin $(0,0)$, provided $r>0$ is sufficiently small. 
Let us verify this in detail, thus establishing the first part of Theorem \ref{extremizersdonotexistmodified}.

Let $a>2(\frac{\lambda}{2})^3$.
We start by noting that \eqref{firstderivative} is valid also when $\xi\neq 0$ is sufficiently small. Using expression \eqref{tripleconvolutiontilde}, one can easily check that
\begin{equation}\label{depszero}
\frac{\partial}{\partial\epsilon}\Big|_{\epsilon=0^+}F(\xi,\epsilon)=0\;\;\textrm{ if } |\xi|\lesssim r.
\end{equation}
Indeed, the derivative in $\epsilon$ may fall in one of two factors. There is no contribution from the function $\widetilde{\mathfrak{G}}_r$ because equations \eqref{abc1} and \eqref{rhoprimeatzero} imply

$$\frac{\partial}{\partial\epsilon}\Big|_{\epsilon= 0^+} \widetilde{\mathfrak{G}}_r(\xi;\rho_{\xi,\theta}(\epsilon),\theta)=\rho_{\xi,\theta}'(0)(\alpha(\theta)+\beta(\theta)+\gamma(\theta))G_r'({\xi}/{3})G_r({\xi}/{3})^2=0.$$
The factor containing the quotient $\rho/\partial_\rho\psi(\rho)$ likewise does not contribute: we can use \eqref{rho/drho}, \eqref{rhoprimeatzero}, recall that $\rho_{\xi,\theta}(0)=0$, and conclude that

$$\frac{\partial}{\partial\epsilon}\Big|_{\epsilon= 0^+}\frac{\rho_{\xi,\theta}(\epsilon)}{\partial_\rho\psi(\rho_{\xi,\theta}(\epsilon))}=\frac{4}{\sqrt{3}}a\xi \Big(\lambda+\frac{4}{3}a\xi^2\Big)^{-5/2}\cdot \sin 3\theta.$$ 
The integral of the function $\sin 3\theta$ over the interval $[0,2\pi]$ vanishes, and this concludes the verification of \eqref{depszero}.
As a consequence, and since $F$ defines in particular a $C^2$ function of $\xi$ and $\epsilon$,
\begin{equation}\label{mixedpartials}
\frac{\partial^2}{\partial\xi\partial\epsilon}\Big|_{(\xi,\epsilon)=(0,0^+)}F(\xi,\epsilon)=0=\frac{\partial^2}{\partial\epsilon\partial\xi}\Big|_{(\xi,\epsilon)=(0,0^+)}F(\xi,\epsilon).
\end{equation}

\noindent At the lower boundary of the support of $\sigma^{(\ast 3)}$ (i.e. when $|\xi|\lesssim r$ and $\epsilon=0$), we have that

\begin{multline}
F(\xi,0)=\frac{1}{\sqrt{3}}\int_0^{2\pi}  \widetilde{\mathfrak{G}}_r(\xi;\rho_{\xi,\theta}(0),\theta) \frac{\rho_{\xi,\theta}(0)}{\partial_\rho\psi(\xi;\rho_{\xi,\theta}(0),\theta)} d\theta\\
=\frac{2\pi}{\sqrt{3}}\frac{(1+g'(\xi/3)^2)^{3/2}}{g''(\xi/3)}=\frac{2\pi}{\sqrt{3}}\frac{1}{\kappa(\xi/3)}.
\end{multline}
Since $a\geq(\frac{\lambda}{2})^3$, the curvature $\kappa$ attains a local minimum at 0, and so $F(\cdot,0)$ attains a local maximum along the lower boundary of its support. This actually defines a  concave function of $\xi$ in a neighborhood of the origin since
\begin{equation}\label{firstderivativexi}
\frac{\partial}{\partial\xi}\Big|_{\xi=0}F(\xi,0)=0
\end{equation}
and
\begin{equation}\label{secondderivativexi}
\frac{\partial^2}{\partial\xi^2}\Big|_{\xi=0}F(\xi,0)=\frac{2\pi}{\sqrt{3}}\frac{d^2}{d\xi^2}\Big|_{\xi=0}\frac{1}{\kappa(\xi/3)}=\frac{2\pi}{\sqrt{3}}\Big(\frac{\lambda}{3}-\frac{8a}{3\lambda^2}\Big)<0.
\end{equation}

\noindent From \eqref{secondderivative}, \eqref{mixedpartials} and \eqref{secondderivativexi} we have that
\begin{displaymath}
D^2F(0,0)=\frac{2\pi}{\sqrt{3}}\frac{1}{\lambda}
\left(
\begin{array}{cc}
\frac{\lambda^2}{3}-\frac{8 a}{3 \lambda} & 0 \\
0  &  2\lambda-\frac{8 a}{\lambda^2}
\end{array}\right).
\end{displaymath}

\noindent Using the second partial derivative test we conclude that, working as we are under the assumption $a>2(\frac{\lambda}{2})^3$,  the function $F$ attains a strict local maximum at the origin. As discussed before, it follows that the triple convolution ${\sigma}\ast{\sigma}\ast{\sigma}$ also has a strict local maximum at the origin. Choosing $r>0$ to be sufficiently small, one can further ensure  this maximum to be  a {\em global} one. 
\vspace{.5cm}

This is the crucial ingredient in showing that extremizers for inequality \eqref{specialcaprestriction} do not exist  for {sufficiently small }caps $[-r,r]$. The proof goes along the lines of what was done in \cite{F, Q2}, and we recall it here. 
Let $Tf:=\widehat{f\sigma}$.
As in the proof of \cite[Proposition 24]{OS},

\begin{align}
\|Tf\|_6^6&=(2\pi)^2\iint_{\R^2} |f\sigma\ast f\sigma\ast f\sigma(\xi,\tau)|^2d\xi d\tau\label{upperboundforopnormT}\\
&\leq(2\pi)^2\iint |f|^2\sigma\ast |f|^2\sigma\ast |f|^2\sigma(\xi,\tau)\cdot\sigma\ast \sigma\ast \sigma(\xi,\tau)d\xi d\tau\notag\\
&\leq(2\pi)^2\sup_{(\xi,\tau)\in\textrm{ supp}(\sigma^{(\ast 3)})}\sigma^{(\ast 3)} (\xi,\tau)\cdot\iint |f|^2\sigma\ast |f|^2\sigma\ast |f|^2\sigma(\xi,\tau)d\xi d\tau\notag\\
&=(2\pi)^2\|\sigma^{(\ast 3)}\|_{L^\infty(\R^2)} \|f\|_{L^2(\sigma)}^6,\notag
\end{align}
where we used H\"{o}lder's inequality twice. If $f\neq 0$, then all the inequalities become equalities only if $\sigma^{(\ast 3)}(\xi,\tau)=\|\sigma^{(\ast 3)}\|_{L^\infty(\R^2)}$ for a.e. $(\xi,\tau)$ in the support of $\sigma^{(\ast 3)}$; see \cite[Lemma 3.1]{Q2}.

As before, assume that $a>2(\frac{\lambda}{2})^3$ and take $r_0>0$ sufficiently small to ensure that $\sigma^{(\ast 3)}$ attains a strict {\em global} maximum at the origin $(0,0)$ if $r<r_0$. Then
\begin{equation}\label{linftynormoftripleconvolution}
\|\sigma^{(\ast 3)}\|_{L^\infty(\R^2)}=\sigma^{(\ast 3)}(0,0)=\frac{2\pi}{\sqrt{3}}\frac{1}{\lambda}.
\end{equation}

\noindent This quantity is of geometric significance. Indeed, 
\begin{equation}\label{Foschi} 
\frac{2\pi}{\sqrt{3}}\frac{1}{\lambda}=\frac{{\mathbf C_F}[\lambda]^6}{(2\pi)^2},
\end{equation}
where ${\mathbf C_F}[\lambda]$ denotes the optimal constant computed by Foschi \cite{F} for the adjoint Fourier restriction inequality for the parabola
$$\mathbb{P}_\lambda=\Big\{(y,z)\in\R^2: z=\frac{\lambda y^2}{2}\Big\}$$
which osculates the curve $\Gamma$ at the point  $p_0=(0,0)$, equipped with projection measure.

As in \cite[Lemma 4.4]{Q2}, the lower bound $\|T \|\geq {\mathbf {C}}_F[\lambda]$ can be easily verified with the help of an explicit extremizing sequence. For that purpose, consider the scaled Gaussian $G(y)=e^{-\lambda y^2/2}$ and its evolution via the Schr\"{o}dinger flow
$$G_1(x,t)=\int_\R G(y) e^{-it\frac{\lambda y^2}{2}} e^{ixy}dy.$$ 
For $\delta>0$, consider the family of trial functions $f_\delta(y)=\delta^{-1/2} G(\delta^{-1}y)\in L^2(\sigma)$. It is shown in the course of the variational calculation of \cite[\S 8.3]{OS} that
$$\lim_{\delta\rightarrow 0^+}\|T f_\delta\|_{L^6(\R^2)} \|f_\delta\|_{L^2(\sigma)}^{-1}=\|G_1\|_6\|G\|_2^{-1}={\mathbf C_F}[\lambda].$$

Aiming at a contradiction, let $0\neq f\in L^2(\sigma)$ be an extremizer for inequality \eqref{specialcaprestriction}. In particular, $\|T f\|_6=\|T \|\cdot \|f\|_{L^2(\sigma)}$. Using the lower bound just described, and the upper bound given by the chain of inequalities \eqref{upperboundforopnormT}, we get that
\begin{equation}\label{chainofinequalities}
 {\mathbf C_F}[\lambda]\|f\|_{L^2(\sigma)}\leq\|T \|\cdot \|f\|_{L^2(\sigma)}=\|T f\|_6\leq(2\pi)^{1/3}\|\sigma^{(\ast 3)}\|_{L^\infty(\R^2)}^{1/6} \|f\|_{L^2(\sigma)}.
 \end{equation}
The $L^\infty$ norm of the triple convolution $\sigma^{(\ast 3)}$ has been computed in \eqref{linftynormoftripleconvolution}. Using it together with \eqref{Foschi}, we see that all inequalities in \eqref{chainofinequalities} are equalities. As mentioned before, this forces $\sigma^{(\ast 3)}(\xi,\tau)=\|\sigma^{(\ast 3)}\|_\infty$ for a.e. $(\xi,\tau)$ in the support of $\sigma^{(\ast 3)}$. But this cannot happen since $\sigma^{(\ast 3)}$ has a {\em strict} local maximum at the origin. The contradiction shows that extremizers for inequality \eqref{specialcaprestriction} cannot exist.

To finish the proof of Theorem \ref{extremizersdonotexistmodified}, let $\{f_n\}$ be an extremizing sequence for inequality \eqref{specialcaprestriction}. Then $\{|f_n|\}$ is an extremizing sequence of nonnegative functions for the same inequality.
As a consequence of the dichotomy established in  \cite[Proposition 21]{OS} and the conclusion of \cite[Lemma 23]{OS}, the sequence $\{|f_n|\}$ must concentrate at some point $(y,g(y))\in\Gamma$, and so $\{f_n\}$ concentrates at the same point. From \cite[Corollary 25]{OS} it follows that $y=0$.\qedhere
\end{proof}
\vspace{.5cm}

Compare with what we already knew from \cite{OS}: $(\kappa''\cdot\kappa^{-3}) (0)<\frac{3}{2}$ if and only if $a<\frac{3}{2}(\frac{\lambda}{2})^3$. In this case, ${\mathbf C}>{\mathbf C_F}[\lambda]$, and so extremizing sequences for inequality \eqref{specialcaprestriction} cannot concentrate. Precompactness of extremizing sequences of nonnegative functions is ensured in this case, and extremizers exist.
The natural question is then:
 
\begin{question}
Do extremizers exist when the parameter $a$ lies in the interval $[\frac{3}{2}(\frac{\lambda}{2})^3,{2}(\frac{\lambda}{2})^3]$? 
\end{question}

\end{document}